\documentclass[leqno,11pt]{amsart}

\usepackage{amssymb, amsmath}
\usepackage{amssymb,amsfonts}
\usepackage{amsmath,latexsym}
\usepackage{pdfsync}
\usepackage{bbm}
\usepackage{xcolor}

\textheight 659pt
\textwidth  444pt
\oddsidemargin  -1mm
\evensidemargin -1mm
\topmargin      -8mm
\frenchspacing
\pagestyle{plain}

\let\al=\alpha
\let\b=\beta

\let\d=\delta
\let\e=\varepsilon

\let\lam=\lambda

\let\D=\Delta

\let\wh=\widehat

%LETTRES RONDES

\def\cF{{\mathcal F}}

\newtheorem{theo}{Theorem}[section]

\newtheorem{lemme}{Lemma}[section]

\newtheorem{cor}{Corollary}[section]

\numberwithin{equation}{section}

\def\sumetage#1#2{\sum_{\substack{{#1}\\{#2}}}}

%STYLE FRANCAIS

%SOMMAIRE
\makeatletter
\def\sommaire{\@restonecolfalse\if@twocolumn\@restonecoltrue\onecolumn
\fi\chapter*{Sommaire\@mkboth{SOMMAIRE}{SOMMAIRE}}
  \@starttoc{toc}\if@restonecol\twocolumn\fi}
\makeatother

%BIBLIOGRAPHIE
\makeatletter
\def\thebibliographie#1{\chapter*{Bibliographie\@mkboth
  {BIBLIOGRAPHIE}{BIBLIOGRAPHIE}}\list
  {[\arabic{enumi}]}{\settowidth\labelwidth{[#1]}\leftmargin\labelwidth
  \advance\leftmargin\labelsep
  \usecounter{enumi}}
  \def\newblock{\hskip .11em plus .33em minus .07em}
  \sloppy\clubpenalty4000\widowpenalty4000
  \sfcode`\.=1000\relax}

\makeatother

%REFERENCES
\makeatletter
\def\references#1{\section*{R\'ef\'erences\@mkboth
  {R\'EF\'ERENCES}{R\'EF\'ERENCES}}\list
  {[\arabic{enumi}]}{\settowidth\labelwidth{[#1]}\leftmargin\labelwidth
  \advance\leftmargin\labelsep
  \usecounter{enumi}}
  \def\newblock{\hskip .11em plus .33em minus .07em}
  \sloppy\clubpenalty4000\widowpenalty4000
  \sfcode`\.=1000\relax}

\makeatother

%MACROS AVEC ARGUMENTS%

\def\refer#1{~\ref{#1}}
\def\refeq#1{~(\ref{#1})}

\def\longformule#1#2{
\displaylines{
\qquad{#1}
\hfill\cr
\hfill {#2}
\qquad\cr
}
}
\def\inte#1{
\displaystyle\mathop{#1\kern0pt}^\circ
}
\def\sumetage#1#2{\sum_{\substack{{#1}\\{#2}}}}

%MACROS MECAFLU%

%ABREVIATIONS%

%\newcommand{\beq}{\begin{eqnarray}}
%\newcommand{\eeq}{\end{eqnarray}}
%\newcommand{\bq}{\begin{equation}}
%\newcommand{\eq}{\end{equation}}
%\newcommand{\beqn}{\begin{eqnarray*}}
%\newcommand{\eeqn}{\end{eqnarray*}}

\newcommand{\beq}{\begin{equation}}
\newcommand{\eeq}{\end{equation}}
\newcommand{\ben}{\begin{eqnarray}}
\newcommand{\een}{\end{eqnarray}}
\newcommand{\beno}{\begin{eqnarray*}}
\newcommand{\eeno}{\end{eqnarray*}}

\let\al=\alpha
\let\b=\beta

\let\d=\delta
\let\e=\varepsilon

\let\lam=\lambda

\let\D=\Delta

\let\wh=\widehat

%LETTRES RONDES

\def\cF{{\mathcal F}}

%MACROS SANS ARGUMENTS
\def\virgp{\raise 2pt\hbox{,}}
\def\cdotpv{\raise 2pt\hbox{;}}

\def\eqdefa{\buildrel\hbox{\footnotesize def}\over =}

\def\C{\mathop{\mathbb C\kern 0pt}\nolimits}
\def\DD{\mathop{\mathbb D\kern 0pt}\nolimits}
\def\EE{\mathop{\mathbb E\kern 0pt}\nolimits}
\def\K{\mathop{\mathbb K\kern 0pt}\nolimits}
\def\N{\mathop{\mathbb  N\kern 0pt}\nolimits}
\def\Q{\mathop{\mathbb  Q\kern 0pt}\nolimits}
\def\R{{\mathop{\mathbb R\kern 0pt}\nolimits}}
\def\SS{\mathop{\mathbb  S\kern 0pt}\nolimits}
\def\St{\mathop{\mathbb  S\kern 0pt}\nolimits}
\def\Z{\mathop{\mathbb  Z\kern 0pt}\nolimits}
\def\ZZ{{\mathop{\mathbb  Z\kern 0pt}\nolimits}}
\def\H{{\mathop{{\mathbb  H\kern 0pt}}\nolimits}}
\def\PP{\mathop{\mathbb P\kern 0pt}\nolimits}
\def\TT{\mathop{\mathbb T\kern 0pt}\nolimits}

\def\pa{\partial}

\newcommand{\ds}{\displaystyle}

%\newcommand{\Z}{{\mathbf Z}}

%%%%%%%

\newcommand{\with}{\quad\hbox{with}\quad}
%%%%%%%

%%%%%%%

%%%%%%%%%%%%%

%%%%%%%%%%%%

\def\dive{\mathop{\rm div}\nolimits}

% MACRO EN ANGLAIS SANS ARGUMENTS

%%%%%%%%%%%%%%%%%%%%%%%%%%
\begin{document}

  \title[On the radius of analyticity of solutions to semi-linear  parabolic systems] {On the radius of analyticity of solutions to semi-linear  parabolic systems}

 \author[J.-Y. Chemin]{Jean-Yves  Chemin}
\address[J.-Y. Chemin]%
{Laboratoire Jacques Louis Lions - UMR 7598,  Sorbonne Universit\'e\\ Bo\^\i te courrier 187, 4 place Jussieu, 75252 Paris
Cedex 05, France}
\email{chemin@ann.jussieu.fr }
\author[I. Gallagher]{Isabelle Gallagher}
\address[I. Gallagher]%
{DMA, \'Ecole normale sup\'erieure, CNRS, PSL Research University, 75005 Paris
 \\
and UFR de math\'ematiques, Universit\'e Paris-Diderot, Sorbonne Paris-Cit\'e, 75013 Paris, France.}
\email{gallagher@math.ens.fr}
\author[P. ZHANG]{Ping Zhang}%
\address[P. Zhang]
 {Academy of
Mathematics $\&$ Systems Science and  Hua Loo-Keng Key Laboratory of
Mathematics, The Chinese Academy of Sciences, Beijing 100190, China, and School of Mathematical Sciences,
University of Chinese Academy of Sciences, Beijing 100049, China.}
\email{zp@amss.ac.cn}
\subjclass[2010]{}
\keywords{}
\begin{abstract}
We study the radius of analyticity~$R(t)$ in space, of
strong solutions to systems of scale-invariant semi-linear parabolic equations. It is well-known that near the initial time,~$R(t)t^{-\frac12}$ is bounded from below by a positive constant. In this paper we   prove
 that~$\displaystyle\liminf_{t\rightarrow 0} R(t)t^{-\frac12}= \infty$, and
assuming higher
regularity for the initial data, we obtain an improved lower bound near time zero.
As an application, we prove that for any global solution~$u\in C([0,\infty); H^{\frac12}(\R^3))$ of  the Navier-Stokes
equations, there holds~$\displaystyle\liminf_{t\rightarrow \infty} R(t)t^{-\frac12}= \infty$.
\end{abstract}

\maketitle

\noindent {\sl Keywords:}  Semi-linear parabolic systems,
Radius of analyticity, Navier-Stokes equations

\vskip 0.2cm
\noindent {\sl AMS Subject Classification (2000):}    35K55  \
\setcounter{equation}{0}
\section{Introduction}
We consider the following system of~$N$ equations on~$\R^+\times \R^d$:
$$
{\rm(SP)} \quad \left\{
\begin{array}{c}
\partial_t U  -\D U  =  P(U)\\
U_{|t=0} =U_0\,,
\end{array}
\right. \with
P_j(U)\eqdefa \sumetage {\ell\in \N^N} {|\ell|=k }  A_{j,\ell} (D)( U^\ell) \quad\hbox{for~$j$ in $\{1,\dots, N\}$}
$$
where~$A_{j,\ell}(D)$ are homogeneous Fourier multipliers  of degree~$\b\in [0,2[,$ and~$U=(U_j)_{1\leq j \leq N}$.  The order of the nonlinearity is~$k\geq 2$ and we have written~$\displaystyle U^\ell = \prod_{j=1}^N U_{j}^{\ell_j}$.
An important property of a such a system is its   scaling invariance: if a function~$U$ satisfies~(SP)  on a time interval~$[0,T]$ with the initial data~$U_0$, then
the function~$U_\lam$ defined by
$$
U_\lam(t,x)\eqdefa \lam^\al U(\lam^2 t,\lam x)
$$
satisfies~(SP) on the time interval~$[0,\lam^{-2} T]$ with the initial data~$U_{0,\lam}\eqdefa \lam^\al U_0(\lam\,\cdot)$ for
$$
\al\eqdefa \frac {2-\b} {k-1}\,\cdotp
$$
Note that~$\alpha$ is positive, and in the following we shall assume that~$ \alpha \leq d/k$.
For example  for the Navier-Stokes equations there holds~$\b=1$ and~$k=2$, while    for the cubic heat equation there holds~$\b=0$ and~$k=3$. In both cases~$\al=1$.
The scaling invariant   Sobolev space for the initial data is~$H^{ s_{\rm crit} } (\R^d)$, with~$ s_{\rm crit}  \eqdefa \frac d 2-\al$, recalling that~$H^{ s } (\R^d)$ is defined by the following norm, for~$s<d/2$:
$$
\|f\|_{H^{ s } (\R^d)}\eqdefa \Big(\int_{\R^d} |\xi|^{2s} |\widehat f(\xi)|^2\, d\xi\Big)^\frac12\,,
$$
where~$\widehat f =\mathcal F f$ is the Fourier transform of~$f$.

The question of solving the Cauchy problem for systems such as~(SP) in scale invariant spaces has been widely studied.
We shall make no attempt at listing all the results on the subject but simply recall the typical so-called Kato-type theorem, which may be proved by a Banach fixed point argument (see for instance~\cite{bcdbookk,giga,kato,ribaud,weissler} among others)
\begin{theo}
\label {KatoAbstrait}
{\sl Assume~$\alpha \in \left]1/k,d/k\right]$ and define~$s_{\rm crit}  \eqdefa   d /2-\al$. Then, for any~$\d\in [0, \al[$,  for any initial data~$U_0  $ belonging to~$ H^{ s_{\rm crit}+\d } (\R^d)$, a positive time~$T$ exists such that  the system~(SP) has a unique solution~$U$ in the Kato space~$K^p_T$  such that
\beq\label{Kato}
\|U\|_{K^p_T} \eqdefa \sup_{t\leq T} t^{\frac 1 p} \|U(t)\|_{H^{s_{\rm crit} +\frac 2 p}} <\infty\,.
\eeq
Moreover, if~$\d$ is positve, a constant~$c$ exists such that~$T\geq c \|U_0\|_{H^{s_{\rm crit} +\d}} ^{ -\frac 2 \d}$.}
\end{theo}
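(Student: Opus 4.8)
The plan is to recast (SP), through the Duhamel formula, as a fixed point problem and to solve it by the Banach contraction principle in the Kato space $K^p_T$, for a suitable exponent $p$. Writing the prospective solution as a fixed point of
\[ \Phi(U)(t) = e^{t\D}U_0 + \int_0^t e^{(t-\tau)\D}P(U(\tau))\,d\tau, \]
I would base the whole argument on two ingredients: the smoothing of the heat semigroup, in the form $\|e^{t\D}f\|_{H^{s+\mu}} \le C\,t^{-\mu/2}\|f\|_{H^s}$ for every $\mu \ge 0$ and $0<t\le 1$, and the Sobolev product laws controlling the $k$-linear map $U\mapsto U^\ell$. Throughout I set $\sigma := s_{\rm crit} + 2/p = d/2 - \al + 2/p$, the regularity carried by $U(\tau)$ in the definition of $K^p_T$, and I choose $p$ with $\max(k,2/\al)<p$, so that $0<\sigma<d/2$ and the product laws below are available; the hypothesis $\al>1/k$ is what makes the relevant range of exponents nonempty.

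First I would treat the linear term. The smoothing inequality with $\mu=2/p$ gives $t^{1/p}\|e^{t\D}U_0\|_{H^\sigma} \le C\|U_0\|_{H^{s_{\rm crit}}}$, hence $\|e^{\cdot\,\D}U_0\|_{K^p_T} \le C\|U_0\|_{H^{s_{\rm crit}}}$ uniformly in $T$; moreover a standard frequency-splitting (density) argument shows that this Kato norm tends to $0$ as $T\to0$. When $\d>0$, one may also arrange $2/p\ge\d$ (compatibly with the previous requirements since $\d<\al$), and then using instead $\mu=2/p-\d\ge0$ together with $U_0\in H^{s_{\rm crit}+\d}$ produces the extra factor $t^{\d/2}$, whence $\|e^{\cdot\,\D}U_0\|_{K^p_T} \le C\,T^{\d/2}\|U_0\|_{H^{s_{\rm crit}+\d}}$. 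This quantitative smallness as $T\to0$ is exactly what will deliver the lower bound on the existence time.

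The core of the proof is the nonlinear estimate, which I expect to be the main obstacle. At time $\tau$ each factor of $U^\ell$ lies in $H^\sigma$, so the Sobolev product law places $U^\ell\in H^{k\sigma-(k-1)d/2}$, and the order-$\b$ multiplier $A_{j,\ell}(D)$ then yields $\|P(U(\tau))\|_{H^{k\sigma-(k-1)d/2-\b}} \le C\|U(\tau)\|_{H^\sigma}^k$. The decisive point is that, because $(k-1)\al=2-\b$ by the very definition of $\al$, this target regularity equals $\sigma-\gamma$ with $\gamma=2-2(k-1)/p$. Inserting this into Duhamel and applying the smoothing inequality once more gives
\[ t^{1/p}\Big\|\int_0^t e^{(t-\tau)\D}P(U(\tau))\,d\tau\Big\|_{H^\sigma} \le C\,t^{1/p}\int_0^t (t-\tau)^{-\gamma/2}\tau^{-k/p}\,d\tau\;\|U\|_{K^p_T}^k. \]
The time integral is a Beta integral equal to a constant times $t^{1-\gamma/2-k/p}$; substituting the value of $\gamma$ produces the exponent $-1/p$, so the prefactor $t^{1/p}$ cancels it and the whole expression is bounded by $C\|U\|_{K^p_T}^k$ uniformly in $t\le T$. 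Convergence of the integral is precisely where $p>k$ is used. The delicate part is thus the multilinear product law in this borderline Sobolev regime, and the bookkeeping that makes the time weights conspire into a scale-invariant bound.

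Finally, rerunning the same computation on the multilinear difference $P(U)-P(V)$ gives $\|\Phi(U)-\Phi(V)\|_{K^p_T} \le C\big(\|U\|_{K^p_T}+\|V\|_{K^p_T}\big)^{k-1}\|U-V\|_{K^p_T}$. Combined with the linear bounds, $\Phi$ maps the ball of radius $2\|e^{\cdot\,\D}U_0\|_{K^p_T}$ into itself and contracts it once $\|e^{\cdot\,\D}U_0\|_{K^p_T}$ is small enough. For $\d=0$ this smallness follows by taking $T$ small; for $\d>0$ the bound $\|e^{\cdot\,\D}U_0\|_{K^p_T}\le C\,T^{\d/2}\|U_0\|_{H^{s_{\rm crit}+\d}}$ shows that the contraction holds as soon as $T^{\d/2}\|U_0\|_{H^{s_{\rm crit}+\d}}\le c$, that is $T\ge c\,\|U_0\|_{H^{s_{\rm crit}+\d}}^{-2/\d}$. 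The Banach fixed point theorem then produces the unique solution $U\in K^p_T$, which completes the proof.
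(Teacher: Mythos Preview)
Your proposal is correct and is precisely the Banach fixed-point scheme the paper alludes to; the paper does not supply its own proof of this statement but defers to the references. In fact your nonlinear estimate---the product law\refeq{lawp}, the identity $(k-1)\al=2-\b$, and the Beta-integral cancellation under the constraint $p>\max(k,2/\al)$---is exactly the computation the paper carries out in Lemma~\ref{S2lem1}, only without the analyticity weight~$e^{\lam\sqrt t|\xi|}$.
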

The goal of this article is to analyze the instantaneous smoothing effect of (SP): let us recall  that in~\cite{foiastemam}, the analyticity of smooth periodic solutions  to the Navier-Stokes equations~(NS) is proved, in the sense that if~$v$ solves (NS) then~$e^{\sigma \sqrt{-t\Delta}}v(t)$ is a smooth function for some~$\sigma>0$.  This result was extended in~\cite{cheminleray,HS,lemarie1, lemarie2} where it is proved for instance  that
$$
 \int_{\R^3 }|\xi| \bigl(\sup_ {t\leq T}
e^{     \sqrt t|\xi|} |\wh v (t,\xi)| \bigr)^2\,d\xi + \int_0^T \int_{\R^3 }|\xi|^3 \bigl(
e^{     \sqrt t|\xi|} |\wh v (t,\xi)| \bigr)^2\,d\xi dt<\infty\,,
$$
which shows that the radius of analyticity~$R(t)$ of~$v(t)$ is bounded from below by~$\sqrt t$.  Note that the above condition is equivalent to the fact  that~$e^{  \sqrt{-t\Delta}}v(t)$ belongs to~$E^\infty_T \cap E^2_T$, where~$E^q_T$ denotes the space of vector fields~$V$ such that
$$
\|V\|_{E^q_T} \eqdefa  \bigl \| 2^{j  \left(\frac12+\frac 2 q \right)}  \|\D_j V\|_{L^q([0,T]; L^2(\R^3))} \bigr\|_{\ell^2(\ZZ)}\,.
$$
This type of result is also known to hold in the more general context of~(SP) (see~\cite{ferrarititi,katomasuda} for instance) and may be stated as follows.
\begin{theo}
\label {KatoAbstraitAna}
{\sl
The solution constructed in Theorem{\rm\refer {KatoAbstrait}}  is analytic for positive~$t$ with radius of analyticity~$R(t)$ greater than~$\sqrt t$.}
\end{theo}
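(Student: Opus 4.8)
The plan is to show that the Gevrey‑modified unknown $V(t)\eqdefa e^{\sqrt{-t\D}}\,U(t)$, whose Fourier transform is $e^{\sqrt t|\xi|}\wh U(t,\xi)$, belongs to the very same Kato space $K^p_T$ as $U$ itself (after possibly shrinking $T$). Once this is known, the membership $V(t)\in H^{s_{\rm crit}+2/p}$ for $t>0$ reads $\int_{\R^d}|\xi|^{2(s_{\rm crit}+2/p)}e^{2\sqrt t|\xi|}|\wh U(t,\xi)|^2\,d\xi<\infty$, so that $\wh U(t,\cdot)$ decays faster than $e^{-\sqrt t|\xi|}$ and $U(t,\cdot)$ is, by Paley--Wiener, the restriction of a function holomorphic on the strip $\{z:|\IM z|<\sqrt t\}$; this is precisely the assertion $R(t)\geq\sqrt t$. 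To prove $V\in K^p_T$, I would start from the Duhamel representation $U(t)=e^{t\D}U_0+\int_0^t e^{(t-s)\D}P(U(s))\,ds$, apply $e^{\sqrt{-t\D}}$, and rewrite the outcome so that every occurrence of $U$ is replaced by $V$ while every kernel is dominated by the bare heat kernel.

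Two elementary scalar inequalities drive the linear estimates. Writing $y=\sqrt t|\xi|$, one has $-t|\xi|^2+\sqrt t|\xi|=-\tfrac12 t|\xi|^2-\tfrac12(y-1)^2+\tfrac12\leq-\tfrac12 t|\xi|^2+\tfrac12$, so the multiplier $e^{\sqrt{-t\D}}e^{t\D}$ of symbol $e^{-t|\xi|^2+\sqrt t|\xi|}$ is dominated by $e^{1/2}$ times the half heat‑semigroup $e^{\frac t2\D}$; this controls the free‑evolution term $e^{\sqrt{-t\D}}e^{t\D}U_0$ in the Kato norm by $\|U_0\|_{H^{s_{\rm crit}+\d}}$ exactly as in Theorem{\rm\refer{KatoAbstrait}}. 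Second, for $0\leq s\leq t$ the concavity of the square root gives $\sqrt t-\sqrt s\leq\sqrt{t-s}$; splitting $e^{\sqrt{-t\D}}=e^{(\sqrt t-\sqrt s)\sqrt{-\D}}\,e^{\sqrt{-s\D}}$ under the Duhamel integral, the kernel $e^{(\sqrt t-\sqrt s)\sqrt{-\D}}e^{(t-s)\D}$ has symbol at most $e^{\sqrt{t-s}|\xi|-(t-s)|\xi|^2}\leq e^{1/2}e^{-\frac{t-s}2|\xi|^2}$, so it too is dominated by $e^{1/2}e^{\frac{t-s}2\D}$. Hence both propagators arising after the Gevrey weight is applied enjoy the same parabolic smoothing as the genuine heat semigroup, up to the constant $e^{1/2}$ and an inessential halving of time.

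It remains to absorb the residual factor $e^{\sqrt{-s\D}}$ into the nonlinearity, and here the decisive fact is the sub‑additivity of the symbol: on the support of the $k$‑fold convolution defining $\wh{U^\ell}(\xi)$ one has $|\xi|\leq\sum_i|\xi_i|$, hence $e^{\sqrt s|\xi|}|\wh{U^\ell}(\xi)|$ is bounded by the same convolution computed with $e^{\sqrt s|\cdot|}|\wh U|=|\wh V|$ in place of each factor $|\wh U|$; since $|A_{j,\ell}(\xi)|\lesssim|\xi|^\b$ and the multilinear estimates underlying Theorem{\rm\refer{KatoAbstrait}} rest on frequency‑space convolution bounds — and are therefore insensitive to replacing each Fourier transform by its modulus, the relevant Sobolev norms being monotone in $|\wh\cdot|$ — the weighted nonlinear term $e^{\sqrt{-s\D}}A_{j,\ell}(D)(U^\ell)(s)$ obeys exactly the bound of Theorem{\rm\refer{KatoAbstrait}} with $U(s)$ replaced by $V(s)$. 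Consequently the integral operator $V\mapsto\int_0^t e^{(\sqrt t-\sqrt s)\sqrt{-\D}}e^{(t-s)\D}A_{j,\ell}(D)(\cdots)\,ds$ satisfies the same $K^p_T$ estimates, with identical constants, as the map analysed when constructing $U$, and a contraction argument in the ball of $K^p_T$ already used there produces a fixed point which by uniqueness coincides with $e^{\sqrt{-t\D}}U$. I expect the main care to be needed not in these estimates but in making the formal application of the \emph{unbounded} multiplier $e^{\sqrt{-t\D}}$ rigorous — most cleanly by running the contraction directly for $V$ in the Gevrey--Kato space, or by truncating the weight to $e^{\sqrt t|\xi|/(1+\e|\xi|)}$ and letting $\e\to0$ with bounds uniform in $\e$ — and in checking that all constants are uniform in $T$, so that $V$ lives on the same time interval as $U$.
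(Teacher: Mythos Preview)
Your argument is correct and is essentially the classical Foias--Temam/Lemari\'e-type proof: apply the Gevrey weight $e^{\sqrt t|\xi|}$, use the elementary inequality $\sqrt t|\xi|-t|\xi|^2\le \tfrac12-\tfrac12 t|\xi|^2$ on the linear piece, the concavity bound $\sqrt t-\sqrt s\le\sqrt{t-s}$ on the Duhamel kernel, and the sub-additivity $|\xi|\le\sum_j|\xi_j|$ to pass the weight through the $k$-fold convolution. All three steps are valid, and your remark that the Sobolev product law\refeq{lawp} is monotone in $|\wh\cdot|$ is exactly the point that makes the nonlinear estimate go through for $V$ in place of $U$.

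Note however that the paper does not give its own proof of Theorem\refer{KatoAbstraitAna}: it is quoted from the literature~\cite{ferrarititi,katomasuda}. What the paper does prove is the sharper Theorem\refer{KatoAbstraitAna+}, and its method is genuinely different from yours. Instead of the weight $e^{\sqrt t|\xi|}$, the paper uses (see\refeq{KatoAbstraitAna+demoeq1}) the weight
\[
e^{-\frac{\lam^2}{4(1-\e)}\frac tT+\lam\frac t{\sqrt T}|\xi|}\,,
\]
which is \emph{linear} in $t$ both in the $|\xi|$-coefficient and in the damping prefactor. Linearity makes the splitting under the Duhamel integral trivial (the weight at time $t$ is the product of the weights at times $t'$ and $t-t'$), so no concavity argument is needed; a single completion-of-squares, $-\frac{\lam^2}{4(1-\e)T}+\frac\lam{\sqrt T}|\xi|-|\xi|^2\le-\e|\xi|^2$, handles both the linear and the Duhamel kernels simultaneously. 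The payoff of this choice is the extra damping factor $e^{-\frac{\lam^2(k-1)}{4(1-\e)}\frac{t'}T}$ produced by the $(k-1)$ surplus copies of the prefactor in the $k$-linear term; this factor becomes small for large $\lam$ and is what drives the improved lower bound $R(t)\gtrsim\sqrt{t\log(1/t)}$. Your $\sqrt t$-weight argument, while perfectly adequate for Theorem\refer{KatoAbstraitAna}, does not produce such a gain and so would not directly yield Theorem\refer{KatoAbstraitAna+}.
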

The purpose of this  work  is the proof of the following improved theorem.
\begin{theo}
\label {KatoAbstraitAna+}
{\sl
\begin{itemize}
\item[(a)] If~$\d$ is positive, the solution constructed in Theorem{\rm\refer {KatoAbstrait}}  satisfies
%$$
%\liminf _{t\rightarrow 0}  \frac {R(t) } {  \sqrt {- t  \log  \bigl( t \|U_0\|^{\frac 2 \d }_{H^{s_{\rm crit} +\d} }\bigr) }}\geq \sqrt {2 \d}\,.
%$$
$$
\liminf _{t\rightarrow 0}  \frac {R(t) } {   t^\frac12  {\sqrt{-   \log  \bigl( t \|U_0\|^{\frac 2 \d }_{H^{s_{\rm crit} +\d} }\bigr) }}}\geq \sqrt {2 \d}\,.
$$
\item[(b)] If~$\d=0$, then, for any positive~$\e$ small enough, we have
%$$
%\liminf _{t\rightarrow 0}  \frac {R(t) } {  \sqrt {- t  \log \bigl( \|e^{\e \tau\D} U_0\| _{K^p_t} }\bigr)}\geq 2\sqrt {1-\e}\,.
%$$
$$
\liminf _{t\rightarrow 0}  \frac {R(t) } {     t^\frac12  {\sqrt{ -   \log  \bigl( \|e^{\e \tau\D} U_0\| _{K^p_t} }\bigr)}}\geq 2\sqrt {1-\e}\,.
$$
In particular~$\ds  \lim_{t\rightarrow 0} \frac {R(t)} { t^\frac12} =\infty$.
\end{itemize}}
\end{theo}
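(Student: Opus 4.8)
The plan is to certify the lower bound on $R(t)$ exactly as Theorem~\ref{KatoAbstraitAna} certifies $R(t)\ge\sqrt t$, namely by controlling the Gevrey-weighted solution $V(t)\eqdefa e^{\phi(t)\sqrt{-\D}}U(t)$ in a weighted analogue of the Kato space: if $\sup_{t\le T}t^{1/p}\|e^{\phi(t)\sqrt{-\D}}U(t)\|_{H^{s_{\rm crit}+2/p}}<\infty$ then $e^{\phi(t)\sqrt{-\D}}U(t)\in H^{s_{\rm crit}}$ for each $t$, whence $R(t)\ge\phi(t)$. I therefore fix $\phi(t)=c\,t^{1/2}(-\log\Theta(t))^{1/2}$, where $\Theta(t)=t\|U_0\|^{2/\d}_{H^{s_{\rm crit}+\d}}$ in case~(a) and $\Theta(t)=\|e^{\e\tau\D}U_0\|_{K^p_t}$ in case~(b), and $c$ is any constant strictly below the claimed one; since the bound will hold for all such $c$, taking the supremum over $c$ yields the stated liminf. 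Applying $e^{\phi(t)\sqrt{-\D}}$ to the Duhamel formula $U(t)=e^{t\D}U_0+\int_0^t e^{(t-t')\D}P(U(t'))\,dt'$ separates a linear from a nonlinear contribution.

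The linear term carries the sharp constants. Factoring $e^{\phi(t)\sqrt{-\D}}e^{t\D}=e^{\phi(t)\sqrt{-\D}}e^{(1-\e)t\D}\,e^{\e t\D}$ and completing the square in Fourier gives the multiplier bound $\|e^{\phi(t)\sqrt{-\D}}e^{(1-\e)t\D}\|\le e^{\phi(t)^2/(4(1-\e)t)}$, attained at $|\xi|=\phi(t)/(2(1-\e)t)$. In case~(b) this yields $t^{1/p}\|e^{\phi(t)\sqrt{-\D}}e^{t\D}U_0\|_{H^{s_{\rm crit}+2/p}}\le e^{\phi(t)^2/(4(1-\e)t)}\|e^{\e\tau\D}U_0\|_{K^p_t}$, which is $O(1)$ precisely when $\phi(t)\le 2\sqrt{1-\e}\,t^{1/2}(-\log\|e^{\e\tau\D}U_0\|_{K^p_t})^{1/2}$, i.e.\ the asserted threshold $2\sqrt{1-\e}$. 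In case~(a) one takes $\e=0$ and instead spends the $\d$ extra derivatives: estimating $\|e^{\phi\sqrt{-\D}}e^{t\D}U_0\|_{H^{s_{\rm crit}}}$ factors out $|\xi|^{-\d}$ from $|\xi|^{s_{\rm crit}}|\widehat{U_0}|$, producing the polynomial gain $(\phi/2t)^{-\d}$ at the critical frequency; the resulting factor $e^{\phi^2/(4t)}(\phi/2t)^{-\d}\|U_0\|_{H^{s_{\rm crit}+\d}}$ is $O(1)$ exactly when $\phi\le\sqrt{2\d}\,t^{1/2}(-\log(t\|U_0\|^{2/\d}_{H^{s_{\rm crit}+\d}}))^{1/2}$, giving the constant $\sqrt{2\d}$. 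The same constant emerges frequency by frequency: $\sup_{t\le T}(\phi(t)2^j-t2^{2j})$ is reached at $t_*\sim c^2(-\log t_*)2^{-2j}$ with value $\sim(c^2/2)(\log 2)\,j$, so the contribution of the block to $H^{s_{\rm crit}}$ is $\sim 2^{j(c^2/2-\d)}$ times the $H^{s_{\rm crit}+\d}$ coefficient, summable in $\ell^2$ iff $c^2\le 2\d$.

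For the nonlinear term the algebraic input is the sub-multiplicativity of the weight under products: since $|\xi|\le\sum_m|\xi_m|$ on the convolution defining $U^\ell$, one has $|\widehat{e^{\phi\sqrt{-\D}}(U^\ell)}(\xi)|\le \widehat{(\prod_m e^{\phi\sqrt{-\D}}|U_{j_m}|)}(\xi)$ pointwise in $\xi$, so the weighted nonlinearity $e^{\phi(t')\sqrt{-\D}}P(U(t'))$ is dominated by the very multilinear expressions in $V(t')=e^{\phi(t')\sqrt{-\D}}U(t')$ that control $P$ in the proof of Theorem~\ref{KatoAbstrait}. Writing $e^{\phi(t)\sqrt{-\D}}=e^{(\phi(t)-\phi(t'))\sqrt{-\D}}e^{\phi(t')\sqrt{-\D}}$ in Duhamel leaves the transport operator $e^{(\phi(t)-\phi(t'))\sqrt{-\D}}e^{(t-t')\D}$ to absorb. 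The crucial point is \emph{not} to bound it by its $L^\infty_\xi$ symbol $e^{(\phi(t)-\phi(t'))^2/(4(t-t'))}$, which is saturated at one frequency and is ruinously lossy as $t'\to0$ (it would spuriously manufacture the factor $e^{\phi(t)^2/(4t)}$). Instead I localise: on $|\xi|\sim 2^j$ the kernel $e^{-(t-t')2^{2j}}$ confines the integral to $t-t'\lesssim 2^{-2j}$, and there the slow growth $\phi'(s)=O(s^{-1/2}(\log(1/s))^{1/2})$ gives $(\phi(t)-\phi(t'))2^j\lesssim 2^j\phi'(t)2^{-2j}\lesssim 1$ for every frequency $2^j\gtrsim(t^{-1}\log(1/t))^{1/2}$—the only ones relevant to the radius—the finitely many lower blocks costing at worst a harmless power of $1/t$. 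Hence half the dissipation absorbs the weight transport while the other half reproduces Kato's multilinear and smoothing estimates verbatim in the weighted space. A fixed point for $V$, or a continuity argument in $c$ applied to the solution of Theorem~\ref{KatoAbstrait} and identified with it by uniqueness, then closes the weighted bound on $[0,T]$.

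The \emph{main obstacle} is exactly this nonlinear step: the weight has to be carried through the heat flow without degrading the sharp constant, and the temptingly simple completing-the-square estimate destroys it; the resolution is the frequency-by-frequency bookkeeping above, in which the dissipation dominates the slowly growing $\phi$ on every block that matters. For part~(b) the extra and decisive ingredient is purely linear: at critical regularity the Kato norm of the regularised linear evolution vanishes as $t\to0$, $\|e^{\e\tau\D}U_0\|_{K^p_t}=\sup_{\tau\le t}\tau^{1/p}\|e^{\e\tau\D}U_0\|_{H^{s_{\rm crit}+2/p}}\to0$ (the continuity at $t=0$ of the flow in $H^{s_{\rm crit}}$). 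Therefore $-\log\|e^{\e\tau\D}U_0\|_{K^p_t}\to+\infty$, and the bound $R(t)\ge 2\sqrt{1-\e}\,t^{1/2}(-\log\|e^{\e\tau\D}U_0\|_{K^p_t})^{1/2}(1+o(1))$ forces $\lim_{t\to0}R(t)t^{-1/2}=\infty$.
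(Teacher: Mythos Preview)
Your linear analysis is essentially right and correctly isolates the constants $\sqrt{2\delta}$ and $2\sqrt{1-\e}$, and your final remark that $\|e^{\e\tau\D}U_0\|_{K^p_t}\to 0$ as $t\to 0$ is precisely what drives part~(b). But your treatment of the nonlinear term is different from the paper's and, as written, has a genuine gap.

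The paper does \emph{not} carry the weight $e^{\phi(t)|\xi|}$ with $\phi(t)=c\,t^{1/2}\sqrt{-\log\Theta(t)}$ through Duhamel. Instead (following Herbst--Skibsted) it fixes $T$ and $\lam$ and works with
\[
U_a(t,\xi)\eqdefa e^{-\frac{\lam^2}{4(1-\e)}\frac tT+\lam\frac t{\sqrt T}|\xi|}\,|\wh U(t,\xi)|\,.
\]
The exponent is \emph{linear in $t$}, so the weight-transport from $t'$ to $t$ combines with the heat kernel by the single pointwise inequality
$-\tfrac{\lam^2}{4(1-\e)T}+\tfrac{\lam}{\sqrt T}|\xi|-|\xi|^2\le-\e|\xi|^2$,
yielding $e^{-\e(t-t')|\xi|^2}$ uniformly in $\xi$ with no frequency decomposition at all. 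The built-in time damping $e^{-\frac{\lam^2}{4(1-\e)}\frac tT}$, when redistributed over the $k$ factors in the product, leaves a deficit of $k-1$ copies and produces the \emph{constant} prefactor $e^{\frac{\lam^2(k-1)}{4(1-\e)}}$ on the nonlinearity. This gives the clean key inequality (Lemma~\ref{S2lem1})
\[
\|U_a\|_{K^p_T}\le\|e^{\e t\D}U_0\|_{K^p_T}+C_{k,\e}\bigl(e^{\frac{\lam^2}{4(1-\e)}}\|U_a\|_{K^p_T}\bigr)^{k-1}\|U_a\|_{K^p_T}\,,
\]
which closes by a bootstrap as long as $\|U_a\|_{K^p_T}\le c_{k,\e}e^{-\lam^2/(4(1-\e))}$. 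The large nonlinear prefactor is thus absorbed by the smallness of $\|U_a\|_{K^p_T}$ itself, not by any localisation. One then picks $\lam=\lam_T$ maximal subject to this bootstrap; since at $t=T$ the weight is $e^{\lam_T\sqrt T|\xi|}$ (up to a harmless constant), one reads off $R(T)\ge\lam_T\sqrt T$. In case~(a) the smallness of the linear term is $CT^{\d/2}\|U_0\|_{H^{s_{\rm crit}+\d}}$, giving $\lam_T\sim\sqrt{2\d(1-\e)}\sqrt{-\log(T\|U_0\|^{2/\d})}$, and $\e\to 0$ at the very end yields the stated constant.

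In your scheme the weight $\phi$ already contains the logarithm and is concave, so the transport factor $e^{(\phi(t)-\phi(t'))|\xi|-(t-t')|\xi|^2}$ cannot be reduced to a fraction of the heat kernel uniformly in $\xi$. Your high-frequency argument is fine, but the claim that the ``finitely many lower blocks'' are harmless is not justified: the number of blocks below $(t^{-1}\log(1/t))^{1/2}$ grows like $\log(1/t)$, and at $t'=0$, $|\xi|\sim\phi(t)/(2t)$ one computes $(\phi(t)-\phi(0))|\xi|-(t-0)|\xi|^2\sim\tfrac{c^2}{4}(-\log\Theta(t))$, i.e.\ a loss of order $\Theta(t)^{-c^2/4}$ in the nonlinear term. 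Since your fixed point aims at $\|V\|_{K^p_T}=O(1)$ rather than exponentially small, there is no reserve to compensate this, and the argument as written does not close. The paper's linear-in-$t$ weight with its damping term is precisely the device that sidesteps this difficulty.
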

We remark that in the case of three-dimensional incompressible Navier-Stokes system
\begin{equation*} \rm{(NS)}
\quad \left\{\begin{array}{l}
\displaystyle \pa_t u + u\cdot\nabla u -\Delta u+\nabla p=0\, , \qquad (t,x)\in\R^+\times\R^3\, , \\
\displaystyle \dive u = 0\, , \\
\displaystyle  u|_{t=0}=u_0\, ,
\end{array}\right.
\end{equation*}
where $u=(u^1,u^2,u^3)$ denotes the velocity of the fluid and $p$ the scalar pressure function,
 part~(a) of Theorem\refer  {KatoAbstraitAna+}
 coincides with Theorem 1.3 of \cite{HS}. Moreover, the main idea used to prove Theorem \ref{KatoAbstraitAna+}
 can be applied to investigate the  radius of analyticity of any global solution of~(NS). More precisely
 we can prove the following result.
\begin{cor}\label{S1cor1}
{\sl Let $u\in C([0,\infty); H^{\frac12}(\R^3))$ be a global solution of (NS). Then one has
\beq \label{S1eq1}
\liminf_{t\to\infty}\frac{R(t)}{ t^\frac12}=\infty\, .
\eeq}
\end{cor}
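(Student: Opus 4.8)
The plan is to combine the scaling invariance of~(NS) with the quantitative smoothing mechanism behind Theorem~\ref{KatoAbstraitAna+}(b): the key point is that a global solution becomes small in the critical space as~$t\to\infty$, and smallness of the data is precisely what forces the analyticity radius to beat~$\sqrt t$. Thus the corollary should follow by restarting the equation at a late time, rescaling to a fixed time horizon, and then running the Gevrey estimate with the data smallness as the governing parameter.

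First I would establish that the critical norm decays, i.e. $\lim_{t\to\infty}\|u(t)\|_{\dot H^{\frac12}}=0$. Since $u_0\in H^{\frac12}\subset L^2$, the Leray energy equality gives $\int_0^\infty\|\nabla u(t)\|_{L^2}^2\,dt\le\frac12\|u_0\|_{L^2}^2<\infty$, so there is a sequence $t_n\to\infty$ with $\|\nabla u(t_n)\|_{L^2}\to0$; interpolating with the bounded $L^2$ norm through $\|u(t_n)\|_{\dot H^{\frac12}}\le\|u(t_n)\|_{L^2}^{1/2}\|\nabla u(t_n)\|_{L^2}^{1/2}$ yields $\liminf_{t\to\infty}\|u(t)\|_{\dot H^{\frac12}}=0$. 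To upgrade this to a genuine limit, I restart the equation at a time $t_n$ where $\|u(t_n)\|_{\dot H^{\frac12}}$ is below the small-data threshold of Theorem~\ref{KatoAbstrait}: the small-data theory then propagates smallness forward, $\sup_{t\ge t_n}\|u(t)\|_{\dot H^{\frac12}}\le C\|u(t_n)\|_{\dot H^{\frac12}}$, and since the right-hand side is arbitrarily small along the sequence we get $\limsup_{t\to\infty}\|u(t)\|_{\dot H^{\frac12}}=0$. Write $\eta(t)\eqdefa\|u(t)\|_{\dot H^{\frac12}}$, so $\eta(t)\to0$.

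Next I would use scaling to convert the large-time statement into a fixed-time statement for small data. Fix $T$ large, set $t_0\eqdefa T/2$, and let $\tilde u(\tau)\eqdefa u(t_0+\tau)$, which by uniqueness is the (NS) solution issued from $w_0\eqdefa u(t_0)$, with $\|w_0\|_{\dot H^{\frac12}}=\eta(T)$; denoting by $\tilde R$ the radius of $\tilde u$, one has $R(T)=\tilde R(T/2)$. For $\mu\eqdefa\sqrt{T/2}$ the field $w(s,y)\eqdefa\mu\,\tilde u(\mu^2 s,\mu y)$ again solves (NS), its data $w(0)=\mu w_0(\mu\cdot)$ still satisfies $\|w(0)\|_{\dot H^{\frac12}}=\eta(T)$ while $\|w(0)\|_{L^2}=\mu^{-1/2}\|w_0\|_{L^2}\to0$, and the scaling of the analyticity radius gives $R_w(s)=\tilde R(\mu^2 s)/\mu$. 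Evaluating at $s=1$ yields the exact identity $R(T)/\sqrt T=R_w(1)/\sqrt2$, so the corollary reduces to the fixed-time lemma: if $w$ solves (NS) with $\|w(0)\|_{\dot H^{\frac12}}+\|w(0)\|_{L^2}=\eta'\to0$, then $R_w(1)\to\infty$ as $\eta'\to0$.

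Finally, to prove this lemma I would rerun the Gevrey-weighted estimate that underlies Theorem~\ref{KatoAbstraitAna+}(b), but now tracking the smallness $\eta'$ rather than letting time tend to zero. With the weight $\phi(s)=a\sqrt s$ one controls the Gevrey--Kato norm $\sup_{s\le1}\|e^{\phi(s)\sqrt{-\Delta}}w(s)\|_{K^p_s}$ via the subadditivity bound $e^{\phi|\xi|}\le e^{\phi|\xi-\zeta|}e^{\phi|\zeta|}$ (valid since $\phi\ge0$), and the bootstrap closes as soon as the weighted free evolution stays below the small-data threshold. Since $\sup_{s>0}s^{1/p}\|e^{\epsilon s\Delta}w(0)\|_{H^{s_{\rm crit}+\frac2p}}\le C\eta'$, the admissible coefficient may be taken as large as $a(\eta')\sim\sqrt{-\log(C\eta')}$, whence $R_w(1)\ge\phi(1)=a(\eta')\to\infty$; plugging back gives $R(T)/\sqrt T\ge a(\eta'(T))/\sqrt2\to\infty$, which is~\eqref{S1eq1}. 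I expect the main obstacle to be exactly this last step, namely showing that the Gevrey bootstrap closes \emph{globally} in time (not merely near $0$) for small data and that the largest admissible radius genuinely grows like $\sqrt{-\log\eta'}$, together with the bilinear Gevrey--Kato estimate needed to absorb the nonlinearity; by comparison the decay step and the scaling identities are routine.
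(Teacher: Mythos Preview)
Your overall strategy---establish the decay $\|u(t)\|_{\dot H^{1/2}}\to0$, restart the equation at a late time where the data is small, and run the Gevrey-weighted bootstrap of Lemma~\ref{S2lem1} so that the admissible exponent $\lambda$ grows like $\sqrt{-\log\eta}$---is exactly the paper's approach. The paper restarts at $t_0$, derives the analogue of~\eqref{KatoAbstraitAna+demoeq5} on $[t_0,t_0+T]$, and closes the bootstrap for \emph{all} $T>0$ once $\|u(t_0)\|_{H^{1/2}}$ is below the threshold; your rescaling to a unit time horizon is an equivalent, if slightly roundabout, repackaging of the same estimate.

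There is, however, a genuine gap in your first step. You write ``since $u_0\in H^{1/2}\subset L^2$'', but in the paper's convention $H^{1/2}$ is the \emph{homogeneous} space (see the norm defined just after the introduction of~(SP)), and $\dot H^{1/2}(\R^3)\not\subset L^2(\R^3)$: for instance a function with $|\wh f(\xi)|=|\xi|^{-3/2}$ near the origin and rapidly decaying at infinity lies in $\dot H^{1/2}$ but not in $L^2$. Consequently the Leray energy identity is unavailable and your route to $\liminf_{t\to\infty}\|u(t)\|_{\dot H^{1/2}}=0$ via $\int_0^\infty\|\nabla u\|_{L^2}^2\,dt<\infty$ breaks down. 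The paper sidesteps this entirely by invoking~\cite{GIP03}, which proves the decay~\eqref{S2eq11} directly for global $\dot H^{1/2}$ solutions with no $L^2$ hypothesis. Once you replace your energy/interpolation argument by that citation (or supply a genuine $\dot H^{1/2}$-level proof of the decay), the remainder of your outline goes through. You may then also drop the $\|w(0)\|_{L^2}$ contribution to $\eta'$, which you never actually use in the final bootstrap, and correct the slip $\|w_0\|_{\dot H^{1/2}}=\eta(T)$ to $\eta(T/2)$.
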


\section{Proof of Theorem\refer {KatoAbstraitAna+}}
We shall perform all our computations on  the approximated system
$$
{{\rm(SP}_n\rm{)}}\quad \left\{
\begin{array}{c}
\partial_t U  -\D U  =  P_n(U)\\
U_{|t=0} =U_0\,,
\end{array}
\right. \with
P_{n,j}(U)\eqdefa \sumetage {\ell\in \N^N} {|\ell|=k } {\bf 1}_{B(0,n)} (D) A_{j,\ell} (D)( U^\ell)
$$
for~$j$ in $\{1,\dots, N\},$ and where we have written ${\bf 1}_{B(0,n)}$ for the characteristic function of the ball~$B(0,n)\eqdefa \left\{\ \xi\in \R^d; \ |\xi|\leq n \right\}.$ The system ${{\rm(SP}_n\rm{)}} $  is an ordinary differential equation  in  all Sobolev spaces. All the quantities we shall write are defined   in this case, and we neglect the index~$n$  in all that follows. We also   skip the final stage
 of passing to  the limit when~$n$  tends to infinity.

\medskip

 Let us consider three positive real numbers~$T$,~$\lam$ and~$\e$ which will be chosen later on in the proof. Motivated by \cite{HS},
we define
\beq
\label {KatoAbstraitAna+demoeq1}
U_a(t,x) \eqdefa \cF^{-1} \bigl( e^{-\frac {\lam^2} {4(1-\e)} \frac t T +\lam \frac t {\sqrt T} |\xi|} |\wh U(t,\xi)|\bigr)\,.
\eeq\,
The main point is that the function~$U_a$ behaves like a solution of a modified system~(SP) where the viscosity  is~$\e$ instead of~$1$ and the
 non-linear term  has a  factor~$e^{-\frac {\lam^2(k-1)} {4(1-\e)} }$.  We shall make this idea more precise in what follows. \smallskip

The key ingredient used to prove Theorem \ref{KatoAbstraitAna+} will be the following lemma.
\begin{lemme}\label{S2lem1}
{\sl Let $U_a$ be defined by \eqref{KatoAbstraitAna+demoeq1}. Then for any $p$ in~$ \left]\max(2/\al,k), \infty\right[,$
there exists a positive constant $C_{k,\e}$ such that
\beq
\label {KatoAbstraitAna+demoeq5}
\|U_a\|_{K^p_T}  \leq  \|e^{\e t\D} U_0\| _{K^p_T}
+C_{k,\e}\Bigl(e^{\frac {\lam^2} {4(1-\e)} } \| U_a\|_{K^p_T}\Bigr)^{k-1}  \| U_a\|_{K^p_T}\,.
\eeq}
\end{lemme}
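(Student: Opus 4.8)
The plan is to start from the Duhamel (mild) formulation of~(SP) written on the Fourier side: for each~$j$,
$$
\wh U(t,\xi) = e^{-t|\xi|^2}\wh U_0(\xi) + \int_0^t e^{-(t-t')|\xi|^2}\,\wh{P_j(U(t'))}(\xi)\,dt'.
$$
Taking moduli and multiplying by the weight $w(t,\xi)\eqdefa e^{-\frac{\lam^2}{4(1-\e)}\frac tT + \lam\frac t{\sqrt T}|\xi|}$ defining $U_a$ in~\eqref{KatoAbstraitAna+demoeq1}, I would obtain a pointwise majorization of $\wh{U_a}(t,\xi)=w(t,\xi)|\wh U(t,\xi)|$ by a linear plus a nonlinear contribution. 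Since the $H^s$ and $K^p_T$ norms see only the modulus of the Fourier transform, controlling $\wh{U_a}$ pointwise by nonnegative quantities suffices.

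For the linear term, the decisive elementary estimate is $w(t,\xi)e^{-t|\xi|^2}\leq e^{-\e t|\xi|^2}$: completing the square in~$|\xi|$ gives $-\frac{\lam^2}{4(1-\e)}\frac tT + \lam\frac t{\sqrt T}|\xi| - (1-\e)t|\xi|^2 = -(1-\e)t\big(|\xi|-\tfrac{\lam}{2(1-\e)\sqrt T}\big)^2\leq 0$, which is precisely why the normalisation $\frac{\lam^2}{4(1-\e)}$ was chosen. Hence the linear part is bounded in $K^p_T$ by $\|e^{\e t\D}U_0\|_{K^p_T}$.

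The heart of the argument is the nonlinear term. Using that the symbols $A_{j,\ell}$ are homogeneous of degree~$\b$, I would bound $|\wh{P_j(U(t'))}(\xi)|\leq C|\xi|^\b\sum_{|\ell|=k}|\wh{U^\ell}(t',\xi)|$, and observe that $|\wh{U^\ell}(t',\xi)|$ is dominated by the $k$-fold convolution of the $|\wh{U_i}(t',\cdot)|$. The crucial point is the subadditivity $|\xi|\leq\sum_i|\eta_i|$ on the hyperplane $\sum\eta_i=\xi$: it distributes the analyticity weight $e^{\lam\frac{t'}{\sqrt T}|\xi|}$ across the factors, so that, by positivity of $\wh{U_a}$, one gets $e^{\lam\frac{t'}{\sqrt T}|\xi|}|\wh{U^\ell}(t',\xi)|\leq e^{k\frac{\lam^2}{4(1-\e)}\frac{t'}T}\wh{U_a^\ell}(t',\xi)$. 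The main obstacle is then the careful bookkeeping of exponentials: collecting the $\xi$-dependent terms leaves $\lam\frac{t-t'}{\sqrt T}|\xi| - (t-t')|\xi|^2$, which by the same completion of square is $\leq -\e(t-t')|\xi|^2 + \frac{\lam^2(t-t')}{4(1-\e)T}$, while the $\xi$-independent exponent collapses to $\frac{\lam^2}{4(1-\e)T}\big[-t + kt' + (t-t')\big] = (k-1)\frac{\lam^2}{4(1-\e)}\frac{t'}T$. The cancellation of the $t$-terms is exactly what produces the exponent $k-1$ rather than~$k$, and bounding $t'/T\leq 1$ yields the factor $\big(e^{\frac{\lam^2}{4(1-\e)}}\big)^{k-1}$.

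After these reductions, $\wh{U_a}(t,\xi)$ is majorized by the Duhamel formula of a modified system with viscosity~$\e$ whose nonlinearity carries the constant $e^{(k-1)\frac{\lam^2}{4(1-\e)}}$. It then remains to apply the standard multilinear estimate in $K^p_T$ — exactly the one underlying the fixed-point proof of Theorem~\refer{KatoAbstrait}, now for the semigroup $e^{\e t\D}$ — which requires $p\in\left]\max(2/\al,k),\infty\right[$ for the time weights to balance and produces the constant $C_{k,\e}$. This delivers~\eqref{KatoAbstraitAna+demoeq5}.
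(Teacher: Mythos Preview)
Your proposal is correct and follows essentially the same approach as the paper. The only cosmetic differences are that you split the exponent into a $(t-t')$-part and a $t'$-part before completing the square, whereas the paper applies the single inequality $-\frac{\lam^2}{4(1-\e)T}+\frac{\lam}{\sqrt T}|\xi|-|\xi|^2\leq -\e|\xi|^2$ directly to pass from $w(t,\xi)e^{-(t-t')|\xi|^2}$ to $e^{-\e(t-t')|\xi|^2}w(t',\xi)$; and you invoke the Kato-space multilinear estimate as a black box, while the paper spells it out via the Sobolev product law $\|\prod_{\ell}a_\ell\|_{H^{ks-(k-1)d/2}}\leq C_k\prod_\ell\|a_\ell\|_{H^s}$ together with the time-integral bound (which is where the hypotheses $p>2/\al$ and $p>k$ enter).
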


\begin{proof} A solution of~${{\rm(SP}_n\rm{)}} $ satisfies
\beq
\label {KatoAbstraitAna+demoeq2}
|\wh U(t,\xi)| \leq e^{-t|\xi|^2}|\wh U_0(\xi)| +C \int_0^t e^{-(t-t')|\xi|^2} |\xi|^\b \bigl(\underbrace {|\wh U(t')| \star \dots \star |\wh U(t')|}_{k\ {\rm times}} \bigr)(\xi) dt'\,.
\eeq
Let us observe that
$$
-\frac {\lam^2} {4(1-\e)} \frac 1 T + \frac  \lam {\sqrt T} |\xi| -|\xi|^2 \leq -\e|\xi|^2.
$$
Thus by definition\refeq   {KatoAbstraitAna+demoeq1}, we infer from\refeq {KatoAbstraitAna+demoeq2}   that
$$
\wh U_a(t,\xi) \leq e^{-\e t|\xi|^2}|\wh U_0(\xi)| +C \int_0^t e^{-\e(t-t')|\xi|^2 }
e^{-\frac {\lam^2} {4(1-\e)} \frac {t'} T +\lam \frac {t'} {\sqrt T} |\xi|} |\xi|^\b \bigl(\underbrace {|\wh U(t')| \star \dots \star |\wh U(t')|}_{k\ {\rm times}} \bigr)(\xi) dt'.
$$
Notice that
\beno
 \bigl(\underbrace {|\wh U(t')| \star \dots \star |\wh U(t')|}_{k\ {\rm times}} \bigr)(\xi)=
\int_{\sum_{\ell=1} ^k\xi_\ell =\xi} \Bigl(\prod_{\ell=1}^k|\wh U(t',\xi_\ell)| \Bigr) d\xi_1\dots d\xi_k\,,
\eeno
and using that, for any~$(\xi_j)_{1\leq j\leq k}$ in~$(\R^d)^k$ such that~$\ds \sum_{j=1}^k \xi_j =\xi$, there holds~$\ds
e^{|\xi|} \leq \prod_{j=1}^k e^{|\xi_j|}, $
we infer~that
\beq
\label {KatoAbstraitAna+demoeq3}
\begin{aligned}
& \wh U_a(t,\xi) \leq e^{-\e t|\xi|^2}|\wh U_0(\xi)| +Ce^{\frac {\lam^2(k-1)} {4(1-\e)} } %\\
%&\qquad\qquad\qquad\qquad\qquad{}\times
\int_0^t e^{-\e(t-t')|\xi|^2 }  |\xi|^\b \bigl(\underbrace {\wh U_a(t') \star \dots \star \wh U_a(t')}_{k\ {\rm times}} \bigr)(\xi) dt'\,.
\end{aligned}
\eeq
Let  us recall the following result on products in Sobolev spaces: for any positive real number~$s$,   smaller than~$d/2$ and greater than~$  d/2-  d/k$, there holds
\beq\label{lawp}
 \Bigl\|\prod_{\ell=1} ^k a_k\Bigr\|_{ H^{ks-(k-1) \frac d 2} } \leq   C_k \prod_{\ell=1} ^k \|a_k\|_{H^s}\, .
 \eeq
Now let us choose~$p$  in~$ [1,\infty]$ such that
\beq
\label {KatoAbstraitAna+demoeq4}
0<  \frac 2p<\alpha
\quad\hbox{and set}\quad
s_p \eqdefa  s_{\rm crit} + \frac2p = \frac d 2 +\frac 2 p -\al\, .
\eeq
Notice that $$ \underbrace {\wh U_a(t') \star \dots \star \wh U_a(t')}_{k\ {\rm times}} = (2\pi)^{kd}  \cF (U_a^k(t'))\,.$$ The
 assumption that $\al\leq \frac{d}k $   implies that $\al<\frac{d}k+\frac2p$ and $s_p>\frac{d}2-\frac{d}k,$  so~\eqref{lawp} ensures that
$$
 \bigl(\underbrace {\wh U_a(t') \star \dots \star \wh U_a(t')}_{k\ {\rm times}} \bigr)(\xi) \leq C^{k}
 |\xi|^{-\left( s_p+ (k-1)\left (\frac 2 p { -}\al \right)\right)} f(t',\xi) \|U_a(t')\|_{H^{s_p} }^k\quad \hbox{with}\  \|f(t')\|_{L^2(\R^d)} =1\,.
$$
As~$\al(k-1) =2-\b$, plugging the above inequality in\refeq  {KatoAbstraitAna+demoeq3} gives
$$
 \wh U_a(t,\xi) \leq e^{-\e t|\xi|^2}|\wh U_0(\xi)| +C^ke^{\frac {\lam^2(k-1)} {4(1-\e)} } \int_0^t e^{-\e(t-t')|\xi|^2 }  |\xi|^{-s_p+2-(k-1)\frac 2 p} f(t',\xi) \| U_a(t')\|_{H^{s_p}}^k  dt'.
$$
By multiplication of this inequality by~$t^{\frac 1 p} |\xi|^{s_p}$ and  by definition of the norm~$\|\cdot\|_{K^p_T}$ in \eqref{Kato}, we get,  for any~$t$ in the interval~$[0,T]$,
$$
\longformule{
t^{\frac 1 p} |\xi|^{s_p}  \wh U_a(t,\xi) \leq t^{\frac 1 p} |\xi|^{s_p}e^{-\e t|\xi|^2}|\wh U_0(\xi)|
}
{ {}+C^ke^{\frac {\lam^2(k-1)} {4(1-\e)} } \| U_a\|_{K^p_T}^k  t^{\frac 1p} \int_0^t e^{-\e(t-t')|\xi|^2 } \frac 1  {\left(t'\right)^{\frac k p}} |\xi|^{2\left(1-\frac{k-1}p\right)} f(t',\xi)  dt'.
}
$$
If we assume that~$p$ is greater than~${ k}$,  the function~$y{\in [0,\infty]\mapsto y^{ 1-\frac {k-1} p}  e^{-\e y}}$ is bounded, we infer that for any~$t$ in the interval~$[0,T]$,
$$
\longformule
{
t^{\frac 1 p} |\xi|^{s_p}  \wh U_a(t,\xi) \leq t^{\frac 1 p} |\xi|^{s_p}e^{-\e t|\xi|^2}|\wh U_0(\xi)|
}
{ {}+C_{k,\e} e^{\frac {\lam^2(k-1)} {4(1-\e)} } \| U_a\|_{K^p_T}^k  t^{\frac 1p} \int_0^t \frac 1 {(t-t')^{1-\frac {k-1}p }}  \frac 1  {t'^{\frac k p}}  f(t',\xi)  dt'.
}
$$
Taking the~$L^2$ norm with respect to the variable~$\xi$ gives, for any~$t$ in the interval~$[0,T]$,
$$
{
t^{\frac 1 p} \|U_a(t)\|_{H^{s_p}}  \leq t^{\frac 1 p} \|e^{\e t\D} U_0\| _{H^{s_p}}
}
{ {}+C_{k,\e} e^{\frac {\lam^2(k-1)} {4(1-\e)} } \| U_a\|_{K^p_T}^k  .
}
$$
Taking the supremum with respect to~$t$ in the interval~$[0,T]$ gives \eqref{KatoAbstraitAna+demoeq5}.
\end{proof}
\smallskip

Let us now turn to the proof of Theorem  \ref{KatoAbstraitAna+}.

\begin{proof}[Proof of Theorem  {\rm\ref{KatoAbstraitAna+}}] Let $U$ and $U_a$ be determined respectively  by~${{\rm(SP}_n\rm{)}} $ and \eqref{KatoAbstraitAna+demoeq1}.
We make the following induction hypothesis
\beq
\label {KatoAbstraitAna+demoeq6}
 \| U_a\|_{K^p_T} \leq  c_{k,\e}  e^{-\frac {\lam^2} {4(1-\e)} } \with
 c_{k,\e} \eqdefa \frac 1 {\left(4C_{k,\e}\right)^{\frac 1{ k-1}} } \,
 \eeq with $C_{k,\e}$ being determined by Lemma \ref{S2lem1}.
As long as this  induction hypothesis is satisfied, Inequality\refeq {KatoAbstraitAna+demoeq5} becomes
\beq
\label {KatoAbstraitAna+demoeq7}
\|U_a\|_{K^p_T}  \leq   {\frac43}\|e^{\e t\D} U_0\| _{K^p_T} \,.
\eeq
Now let us distinguish the case  when~$U_0$  belongs to the space~$H^{{ s_{\rm crit}}+\d}$ from the case when~$U_0$ belongs only to  the critical space~$H^{s_{\rm crit}}$.

\begin{itemize}
\item [(a)] The case  when~$U_0$  belongs to the space~$H^{{ s_{\rm crit}}+\d}.$
\end{itemize}
We first observe that \beq
\label {KatoAbstraitAna+demoeq8}
\|e^{\e t\D} U_0\| _{K^p_T} \leq C_{\d,p} T^{\frac \d 2} \|U_0\|_{H^{s_{\rm crit} +\d}}
\eeq
Let us define
$$
T_{\e} (U_0) \eqdefa \eta_\e \|U_0\|_{H^{s_{\rm crit} +\d}}^{-\frac 2 \d}\with \eta_\e \eqdefa \biggl( \frac {c_{k,\e}}{{ 2C_{\d,p}}}\biggr)^{ \frac 2 \d}\cdotp
$$
By definition of~$T_{\e} (U_0)$, we have that for any~$T\leq T_{\e} (U_0)$,
$$
  2 C_{\d,p} T^{\frac \d 2} \|U_0\|_{H^{s_{\rm crit} +\d}} \leq c_{k,\e}.
$$
Now let us define
$$
\lam_T \eqdefa   {\sqrt {2 \d(1-\e) } \log^\frac12 \biggl(\frac {\eta_\e  }  { T \|U_0\|_{H^{s_{\rm crit} +\d}}^{{\frac2{\d}}}}\biggr)} \,\cdotp
$$
Then for $T\leq T_\e(U_0),$  we deduce from the Ineqalities\refeq  {KatoAbstraitAna+demoeq7} and\refeq {KatoAbstraitAna+demoeq8}
that
$$
\|U_a\|_{K^p_T}\leq \frac43C_{\d,p} T^{\frac \d 2} \|U_0\|_{H^{s_{\rm crit} +\d}}<2C_{\d,p} T^{\frac \d 2} \|U_0\|_{H^{s_{\rm crit} +\d}}
= c_{k,\e} e^{-\frac {\lam_T^2} {4(1-\e)} }.
$$
This in turn shows that \eqref{KatoAbstraitAna+demoeq6} holds for $T\leq T_\e(U_0).$ Furthermore, according to
\eqref{Kato} and \eqref{KatoAbstraitAna+demoeq1}, there holds
$$
T^{\frac 1 p}   \bigl\|  e^{\lam_T \sqrt T {|D|}} U(T)\bigr\|_{H^{s_{\rm crit} +\frac 2 p}} \leq c_{k,\e}.
$$
As~$\d$ is less than~$\al$, taking~$\ds p =\frac 2 \d$ ensures that
$$
\forall T \leq T_\e(U_0)\,,\ R(T) \geq    { \sqrt {2 \d(1-\e) } T^\frac12 \log^\frac12 \biggl(\frac {\eta_\e  }  { T \|U_0\|^{\frac 2\d }_{H^{s_{\rm crit} +\d}}}\biggr)}\,\cdotp
$$
This inequality means exactly that
$$
\liminf _{T\rightarrow 0}  \frac {R(T) } {   T ^\frac12  {\sqrt{-  \log   \bigl( T \|U_0\|^{\frac 2 \d }_{H^{s_{\rm crit} +\d} }\bigr) }}} \geq \sqrt {2 \d(1-\e) } \,.
$$
Due the  fact that~$\e$ is arbitrary, we conclude the proof of  part (a) of Theorem  \ref{KatoAbstraitAna+}.

\begin{itemize}
\item [(b)] The case  when~$U_0$  belongs to the critical space~$H^{{ s_{\rm crit}}}.$
\end{itemize}
 Let us use the fact that in this case
\beq\label{S2eq8}
\lim_{T\rightarrow 0} \|e^{\e t\D} U_0\| _{K^p_T}=0\,.
\eeq
Then we consider~$T_\e(U_0)$ such that
\beq \label{eq2.10}
\|e^{\e t\D} U_0\| _{K^p_{T_\e(U_0)}} \leq  c_{k,\e} \,.
\eeq
For any~$T\leq  T_\e(U_0)$, let us define
$$
\lam_T \eqdefa  2  {(1-\e)^\frac12 \log^\frac12 \biggl( \frac {c_{k,\e}} {2\|e^{\e t\D} U_0\| _{K^p_T} }\biggr) }
\,.
$$
Then it follows from Inequality\refeq  {KatoAbstraitAna+demoeq7}  that
$$
\|U_a\|_{K^p_T}\leq \frac43\|e^{\e t\D} U_0\| _{K^p_T}<2\|e^{\e t\D} U_0\| _{K^p_T}
= c_{k,\e} e^{-\frac {\lam_T^2} {4(1-\e)} }.
$$
This shows that \eqref{KatoAbstraitAna+demoeq6} indeed  holds for $T\leq T_\e(U_0).$ Furthermore, according to
\eqref{Kato}  and \eqref{KatoAbstraitAna+demoeq1}, there holds
$$
T^{\frac 1 p}   \bigl\|  e^{\lam_T \sqrt T {|D|}} U(T)\bigr\|_{H^{s_{\rm crit} +\frac 2 p}} \leq c_{k,\e}\,.
$$
By definition of~$\lam_T$ this means in particular that
$$
\forall T\leq  T_\e(U_0)\,,\ R(T) \geq 2  {(1-\e)^\frac12  T^\frac12\log^\frac12 \biggl( \frac {c_{k,\e}} {2\|e^{\e t\D} U_0\| _{K^p_T} }\biggr)}
\,.
$$
This inequality means exactly that for any small  strictly positive~$\e$, we have
$$
\liminf _{T\rightarrow 0}  \frac {R(T) } {    {  T^\frac12 \sqrt{- \log^\frac12 \bigl( \|e^{\e t\D} U_0\| _{K^p_T} }\bigr)}}\geq \lim_{T\rightarrow 0}2  {(1-\e)^\frac12\biggl(1- \frac {\log c_{k,\e}} {\log\bigl(2\|e^{\e t\D} U_0\| _{K^p_T}\bigr) }\biggr)^\frac12}\,,
$$  which together with \eqref{S2eq8} ensures part (b) of of Theorem  \ref{KatoAbstraitAna+}.
This completes the proof of the theorem.
\end{proof}
 \section{Proof of Corollary \ref{S1cor1}}
 Let $u\in C([0,\infty); H^{\frac12}(\R^3))$ be a global solution of the Navier-Stokes
system (NS) with initial data $u_0.$ Then it follows from \cite{chemin99} that this solution is unique, so that  applying
Theorem 2.1 of \cite{GIP03} yields
\beq\label{S2eq11}
\lim_{t\to\infty} \|u(t)\|_{H^{\frac12}}=0\, .
\eeq
Moreover, for any $t_0>0,$ $u$ verifies
\begin{equation*} (NS_{t_0})
\quad \left\{\begin{array}{l}
\displaystyle \pa_t u + u\cdot\nabla u -\Delta u+\nabla p=0, \qquad (t,x)\in ]t_0,\infty[\times\R^3, \\
\displaystyle \dive u = 0, \\
\displaystyle  u|_{t=t_0}=u(t_0).
\end{array}\right.
\end{equation*}
Similar to \eqref{KatoAbstraitAna+demoeq1} we denote
\beq
\label{S2eq12}
u_{a,t_0}(t,x) \eqdefa \cF^{-1} \bigl( e^{-\frac {\lam^2} {4(1-\e)} \frac {t-t_0} T +\lam \frac {t-t_0} {\sqrt T} |\xi|} |\wh u(t,\xi)|\bigr),
\eeq
and
\beno
\|u\|_{K^p_{t_0,T}}\eqdefa \sup_{t\in [t_0,t_0+T]}\Bigl((t-t_0)^{\frac1p}\|u(t)\|_{H^{\frac12+\frac2p}}\Bigr).
\eeno
Then along the same line to proof of Lemma \ref{S2lem1}, we deduce that
\beq \label{S2eq13}
\|u_{a,t_0}\|_{K^p_{t_0,T}}  \leq  \|e^{\e (t-t_0)\D} u(t_0)\| _{K^p_{t_0,T}}
+C_{\e}e^{\frac {\lam^2} {4(1-\e)} } \| u_{a,t_0}\|_{K^p_{t_0,T}}^2\,.
\eeq
By  \eqref{S2eq11}, we can choose $t_0$ so large that
\beno
\|u(t_0)\|_{H^{\frac12}}\leq \frac{c_\e}{2K_\e}
\eeno
with $K_\e$ being determined by $\|\e^{\e t\D}u_0\|_{K^p_\infty}\leq K_\e\|u_0\|_{H^{\frac12}}.$

Let us make the induction assumption that
\beq\label{S2eq14}
 \| u_{a,t_0} \|_{K^p_{t_0,T}} \leq  c_{\e}  e^{-\frac {\lam^2} {4(1-\e)} } \with
 c_{\e} \eqdefa \frac 1 {4C_{\e}} \, \cdotp
 \eeq
Then as long as the induction assumption is satisfied, we infer from \eqref{S2eq13} that
\beno
\| u_{a,t_0} \|_{K^p_{t_0,T}}\leq \frac43\|e^{\e (t-t_0)\D} u(t_0)\| _{K^p_{t_0,T}}<2\|e^{\e (t-t_0)\D} u(t_0)\| _{K^p_{t_0,T}}
\leq 2K_\e\|u(t_0)\|_{H^{\frac12}}\leq c_\e.
\eeno
So that for any $T>0,$ we define
$$
\lam_T \eqdefa  2  {(1-\e)^\frac12 \log^\frac12 \biggl( \frac {c_{\e}} {2K_\e\|u(t_0)\| _{H^{\frac12} }}\biggr) }
\,,
$$
we have
\beno
\| u_{a,t_0} \|_{K^p_{t_0,T}}< 2K_\e\|u(t_0)\|_{H^{\frac12}}\leq c_\e e^{-\frac {\lam_T^2} {4(1-\e)} }.
\eeno
This in turn shows that \eqref{S2eq14} holds for any $T>0.$ \eqref{S2eq14} in particular implies that
$$
T^{\frac 1 p}   \bigl\|  e^{\lam_T \sqrt T {|D|}} u(t_0+T)\bigr\|_{H^{\frac12 +\frac 2 p}} \leq c_{\e}\,.
$$
As a result, it comes out
$$
\forall T\,,\ R(t_0+T) \geq 2  {(1-\e)^\frac12  T^\frac12\log^\frac12 \biggl( \frac {c_{\e}} {2K_\e\|u(t_0)\| _{H^{\frac12} } }\biggr)}
\,,
$$
from which  we infer that
$$
\liminf _{T\rightarrow \infty}  \frac {R(t_0+T) } { \sqrt{t_0+T}}=\liminf _{T\rightarrow \infty}  \frac {R(t_0+T) } { \sqrt{T}}
\geq 2  {(1-\e)^\frac12  \log^\frac12 \biggl( \frac {c_{\e}} {2K_\e\|u(t_0)\| _{H^{\frac12} } }\biggr)}\,.
$$
This together with \eqref{S2eq11} ensures \eqref{S1eq1}. This finishes the proof of Corollary \ref{S1cor1}.
\qed

 \bigbreak \noindent {\bf Acknowledgments.} Part of the work was done when P. Zhang was visiting
Laboratoire J. L. Lions of  Sorbonne Universit\'e in the fall of 2018. He would like to thank the hospitality of the Laboratory.
P. Zhang is partially supported
by NSF of China under Grants   11371347 and 11688101, and innovation grant from National Center for
Mathematics and Interdisciplinary Sciences.

   \end{document}